\documentclass{amsart}

\usepackage{amssymb}
\usepackage{amsmath}
\usepackage{amsthm}




\newtheorem{theorem}{Theorem}[section]
\newtheorem{lemma}[theorem]{Lemma}
\newtheorem{corollary}[theorem]{Corollary}
\newtheorem{proposition}[theorem]{Proposition}

\theoremstyle{remark}


\begin{document}
\title[$K(s)^*(BG)$, $G=(C_{2^{n+1}}\times C_{2^{n+1}})\rtimes C_2$]{All extensions of $C_2$
by $C_{2^{n+1}}\times C_{2^{n+1}}$ are good}
\author{Malkhaz Bakuradze}
\address{Iv. Javakhishvili Tbilisi State University, Faculty of Exact and Natural Sciences}
\email{malkhaz.bakuradze@tsu.ge}
\subjclass[2010]{55N20; 55R12; 55R40}
\keywords{Transfer, Morava $K$-theory}


\date{}


\begin{abstract}
Let $C_{m} $ be a cyclic group of order $m$.  We prove that if the group $G$ fits into an extension $1\to C_{2^{n+1}}^2\to G\to C_2\to 1$ then $G$ is good in the sense of Hopkins-Kuhn-Ravenel, i.e., $K(s)^*(BG)$
is evenly generated by transfers of Euler classes of complex representations of subgroups of $G$. Previously this fact was known for $n=1$. 
\end{abstract}

\maketitle{}
\noindent{Keywords: Morava $K$-theory, Transfer homomorphism, Euler class.}

\section{Introduction and Statements}
This paper is concerned with analyzing the 2-primary Morava $K$-theory of the classifying spaces $BG$ of the groups in the title. In particular it answers  the question whether transfers of Euler classes suffice to generate $K(s)^*(BG)$. Here $K(s)$ denotes  Morava K-theory at prime $p=2$ and natural $s>1$.  The coefficient ring $K(s)^*(pt)$ is the Laurent polynomial ring in one variable, $\mathbb{F}_2[v_s,v_s^{-1}]$, where $\mathbb{F}_2$ is the field of 2 elements and $deg(v_s)=-2(2^s-1)$ \cite{JW}. So the coefficient ring is a graded field in the sense that all its graded modules are free, therefore Morava $K$-theories enjoy the K\"{u}nneth isomorphism. In particular, we have for the cyclic group $C_{2^{n+1}}$ that as a $K(s)^*$-algebra
$$K(s)^*(BC_{2^{n+1}}^2)=K(s)^*(BC_{2^{n+1}})\otimes K(s)^*(BC_{2^{n+1}}),$$
whereas $K(s)^*(BC_{2^m})=K(s)^*[u]/(u^{2^{ms}})$. So that
$$K(s)^*(BC_{2^{n+1}}^2)=K(s)^*[u,v]/(u^{2^{(n+1)s}},v^{2^{(n+1)s}}),$$
where $u$ and $v$ are Euler classes of canonical complex linear representations.

\medskip

The definition of good groups in the sense \cite{HKR} is as follows.

a) For a finite group $G$, an element $x\in K(s)^*(BG)$ is good
if it is a transferred Euler class of a complex subrepresentation of $G$, i.e., a class
of the form $Tr^*(e(\rho))$, where $\rho$ is a complex representation of a subgroup $H<G$, $e(\rho)\in K(s)^*(BH)$ is its Euler class (i.e., its top Chern class, this being defined since $K(s)^*$ is a complex oriented theory), and $Tr:BG \to BH$ is the transfer map.

(b) $G$ is called to be good if $K(s)^*(BG)$ is spanned by good elements as a $K(s)^*$–module.

Recall not all finite groups are good as it was originally conjectured in \cite{HKR}. For odd prime $p$ a counterexample to the even degree was constructed In \cite{K}.   
The problem to construct $2$-primary counterexample  conjecture remains open.

Our main result is as follows

\medskip

\begin{theorem}
\label{main} 
All extensions of $C_2$ by  $C_{2^{n+1}}^2$ are good.
\end{theorem}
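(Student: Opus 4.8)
The plan is to exploit the normal abelian subgroup $A=C_{2^{n+1}}^2$ of index $2$ and reduce everything to the explicitly known ring $R:=K(s)^*(BA)=K(s)^*[u,v]/(u^{2^{(n+1)s}},v^{2^{(n+1)s}})$. First I would classify the extensions: since goodness is an isomorphism invariant it suffices to treat one representative of each isomorphism class of $G$, and these are governed by an action $C_2\to \mathrm{Aut}(A)=GL_2(\mathbb{Z}/2^{n+1})$, i.e. by a conjugacy class of involutions (the identity included), together with a class in $H^2(C_2;A)$ for the chosen action. When the action is trivial and the extension abelian, $G$ is abelian and hence good at once: $K(s)^*(BG)$ is then spanned over $K(s)^*$ by monomials $u^iv^j=e(\chi)^ie(\psi)^j=e(i\chi\oplus j\psi)$, each of which is literally an Euler class. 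So the problem reduces to the finite list of nonabelian semidirect and non-split extensions.

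For a nonabelian representative I would use the transfer/restriction (Gysin) exact sequence attached to the fibration $BA\to BG\to BC_2$. Writing $\lambda$ for the pullback to $G$ of the faithful character of $C_2=G/A$ and $y=e(\lambda)\in K(s)^*(BG)$, this reads
\[
\cdots \to K(s)^*(BG)\xrightarrow{\,\cdot y\,}K(s)^*(BG)\xrightarrow{\ \mathrm{res}\ }R\xrightarrow{\ \mathrm{Tr}\ }K(s)^*(BG)\xrightarrow{\,\cdot y\,}\cdots,
\]
from which $\ker(\mathrm{res})=y\cdot K(s)^*(BG)$ and $\mathrm{im}(\mathrm{Tr})=\mathrm{Ann}(y)$. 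Since $y$ is pulled back from $BC_2$ it satisfies $y^{2^s}=0$, so multiplication by $y$ is nilpotent and its filtration is finite. The double coset formula gives $\mathrm{res}\circ\mathrm{Tr}=1+\sigma$, where $\sigma$ is the involution induced on $R$ by the chosen action, and $\mathrm{im}(\mathrm{res})$ lies in the invariants $R^{\sigma}$.

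Now for the goodness mechanism. The set of good elements is a $K(s)^*$-submodule closed under multiplication by Euler classes of $G$-representations, by the projection formula $e(\rho)\cdot \mathrm{Tr}_H^G(e(\tau))=\mathrm{Tr}_H^G\big(e(\rho|_H\oplus\tau)\big)$. Two families of good elements feed an induction: all transfers $\mathrm{Tr}_A^G(u^iv^j)$ (good because $u^iv^j$ is an Euler class on the abelian $A$), and all Euler classes $e(\rho)$ of genuine $G$-representations (here $\mathrm{Tr}=\mathrm{id}$), the latter restricting to $e(\rho|_A)\in R^{\sigma}$. Given $z\in K(s)^*(BG)$, I would write $\mathrm{res}(z)\in R^{\sigma}$ as a $K(s)^*$-combination of products of restricted $G$-Euler classes plus a norm $(1+\sigma)(w)=\mathrm{res}\,\mathrm{Tr}(w)$; lifting these to the corresponding good elements and subtracting places the difference in $\ker(\mathrm{res})=y\cdot K(s)^*(BG)$. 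Thus $z=(\text{good})+y z'$, and since $y\cdot(\text{good})$ is good and $y^{2^s}=0$, iterating shows $z$ is good.

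The crux, and the main obstacle, is the invariant-theoretic input of the previous paragraph: one must show that $\mathrm{im}(\mathrm{res})\subseteq R^{\sigma}$ is generated, as needed above, by the restrictions $e(\rho|_A)$ of Euler classes of $G$-representations together with the norm image $(1+\sigma)R$. At $p=2$ this is delicate because $1+\sigma$ vanishes on $R^{\sigma}$, so transfers alone cannot reach the invariants; one genuinely needs enough Chern classes of two-dimensional induced representations $\mathrm{Ind}_A^G\chi$ (and of $\lambda$) to generate $R^{\sigma}$ modulo norms. I would compute $R^{\sigma}$ and the relevant Chern classes explicitly for each involution $\sigma$ and for general $n$, which is exactly the step that promotes the known case $n=1$ to arbitrary $n$; the twisted, non-split extensions, where $G$ has the fewest low-dimensional representations, are expected to require the most care, and there the uniform truncated polynomial description of $R$ together with an HKR character count of $\dim_{K(s)^*}K(s)^*(BG)$ serve to confirm that the good elements produced indeed exhaust the whole module.
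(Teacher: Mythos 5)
Your overall scheme---reduce to the $C_2$-action on $R=K(s)^*(BA)$, let transfers handle the norm part, realize invariants by Euler classes, and confirm exhaustion by an HKR count---is genuinely parallel to the paper's Serre spectral sequence argument, but the engine you invoke is faulty, and the step you defer is the actual theorem. First the engine: the four-term periodic ``Gysin'' sequence $K(s)^*(BG)\xrightarrow{\cdot y}K(s)^*(BG)\xrightarrow{\mathrm{res}}R\xrightarrow{\mathrm{Tr}}K(s)^*(BG)$ is not exact for a double covering in a complex-oriented theory at $p=2$. The covering $BA\to BG$ is the sphere bundle of the \emph{real} sign line bundle $L$; the honest cofibration is $BA_+\to BG_+\to BG^{L}$, and $BG^{L}$ cannot be traded for a degree shift of $BG_+$ because $K(s)^*$ has no Thom class for real line bundles (only for $L\oplus L\cong\lambda$). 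Already for $1\to C_2\to C_4\to C_2\to 1$ your sequence fails: here $K(s)^*(BC_4)=K(s)^*[z]/(z^{4^s})$, $y=[2]_F(z)=z^{2^s}$, restriction to $K(s)^*(BC_2)$ is surjective, yet $\mathrm{Tr}(1)=y^{2^s-1}=z^{4^s-2^s}\neq 0$ by Quillen's formula, so $\mathrm{im}(\mathrm{res})=R\neq\ker(\mathrm{Tr})$ and exactness at $R$ is false. Worse, your bootstrapping premise $\ker(\mathrm{res})=y\cdot K(s)^*(BG)$ already implies $K(s)^{odd}(BG)=0$: any odd-degree class lies in $\ker(\mathrm{res})$ since $K(s)^{odd}(BA)=0$, while an odd class of the form $yz'$ vanishes by iterating with $y^{2^s}=0$. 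Even generation is part of the conclusion, so you are assuming a substantial piece of what is to be proven. The paper avoids all of this by running the Serre spectral sequence $H^*(BC_2;K(s)^*(BH))\Rightarrow K(s)^*(BG)$ and using the decomposition \eqref{free-trivial} into free and trivial $C_2$-modules together with $\pi^*Tr^*=1+t$, which rigorously isolates exactly the residual invariants $T$ that must be realized by good elements.

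Second, and more importantly, you explicitly defer ``the crux'': computing the invariants modulo norms and exhibiting them as restrictions of Euler classes, uniformly in $n$, for each involution. That deferred step \emph{is} the mathematical content of the paper for general $n$: the special bases $\{x^{\omega},x^{\omega}u,x^{\omega}v,x^{\omega}uv\}$ of Lemma \ref{specialbase}, with case-dependent truncation exponents extracted from the formal group law approximations \eqref{FGL2^s} and \eqref{FGL} and the relation \eqref{v^2^m}; the invariance criterion $f_3\bar{x}_1=f_3\bar{y}_1=0$, $f_1\bar{x}_1=f_2\bar{y}_1$ of Lemma \ref{invariants}; the explicit realizations in Proposition \ref{T} via bundles such as $\det\pi_!(\nu)\otimes\alpha$ and the extensions $\lambda'$, $\nu'$ of $\lambda$, $\nu^{2^n}$ to $BG$; and the Euler-characteristic identity \eqref{sizeT} proving that the good elements produced exhaust $T$. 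None of this is routine---the truncations differ group by group ($2^{ns-1}$ versus $2^{s-1}$ exponents), and relations like $\bar{y}_2^{2^{s-1}}\equiv u\bar{x}_1^{2^{ns}-1} \bmod (1+t)$ for $G_7$ require genuine FGL manipulation. You would also redo all seventeen conjugacy classes of involutions by hand, whereas the paper dispatches eleven of them at once via Theorem \ref{1} (diagonal actions on products of good groups) and known computations (Corollary \ref{easy-case}), leaving only $G_3,G_4,G_7,G_8,G_9,G_{11}$. As written, your text is a plausible research plan with a broken structural tool, not a proof.
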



For $n=1$ the statement of the theorem was known. 
See  \cite{B-GMJ2}, \cite{BJ}, \cite{SCH1}, \cite{SCHY} for detailed discussion and examples.

Of course the Serre spectral sequence is used throughout the paper. However, if to operate straightforward, even  for $s=2$, $n=1$, this requires a serious computational effort and use of computer, see \cite{SCH4} p.78. We simplify the task of calculation with invariants by suggesting the special bases for particular $C_2$-modules $K(s)^*(BH)$,  see Lemma \ref{specialbase} and Lemma \ref{invariants}.
This simple but comfortable idea is our key tool to prove Theorem \ref{main}. We will prove it for the semi-direct products 
\begin{equation}
\label{semi}
(C_{2^{n+1}}\times C_{2^{n+1}})\rtimes C_2.
\end{equation} 
Then the general case follows because of the fact that the Serre spectral sequence does not see the difference between the semi-direct products and their non-split versions.


\section{Preliminaries}

Recall \cite{G-P} there exist exactly 17 non-isomorphic groups of order $2^{2n+3}$, $n\geq 2$, which can be presented as a semidirect product
\eqref{semi}. Each such group G is given by three generators $\mathbf{a}, \mathbf{b}, \mathbf{c}$ and the defining relations
$$\mathbf{a}^{2^{n+1}}=\mathbf{b}^{2^{n+1}}=\mathbf{c}^2 = 1, \mathbf{a}\mathbf{b} = \mathbf{b}\mathbf{a}, \mathbf{c}^{-1}\mathbf{a}\mathbf{c} =\mathbf{a}^i\mathbf{b}^j, \mathbf{c}^{-1}\mathbf{b}\mathbf{c} =\mathbf{a}^k\mathbf{b}^l$$
for some $i, j, k, l \in Z_{2^{n+1}}$ ($Z_{2^m}$ denotes the ring of residue classes modulo $2^m$). In particular one has the following
\begin{proposition} (see \cite{G-P} )
	\label{Euler}
	Let $n$ be an integer such that $n \geq 2$. Then there exist exactly 17 non-isomorphic groups of order $2^{2n+3}$
	which can be presented as a semi-direct product \eqref{semi}. They are:
	
	\begin{align*}
	&G_1=\langle \mathbf{a},\mathbf{b},\mathbf{c} \mid (*), \mathbf{c}\mathbf{a}\mathbf{c}=\mathbf{a}, \mathbf{c}\mathbf{b}\mathbf{c}=\mathbf{b}
	\rangle  , \\
	&G_2=\langle \mathbf{a},\mathbf{b},\mathbf{c} \mid (*), \mathbf{c}\mathbf{a}\mathbf{c}=\mathbf{a}^{1+2^{n}}, \mathbf{c}\mathbf{b}\mathbf{c}=\mathbf{b}^{1+2^{n}} \rangle ,\\
	&G_3=\langle \mathbf{a},\mathbf{b},\mathbf{c} \mid (*), \mathbf{c}\mathbf{a}\mathbf{c}=\mathbf{a}\mathbf{b}^{2^{n}}, \mathbf{c}\mathbf{b}\mathbf{c}=\mathbf{b} \rangle ,\\
	&G_4=\langle \mathbf{a},\mathbf{b},\mathbf{c} \mid (*), \mathbf{c}\mathbf{a}\mathbf{c}=\mathbf{a}^{1+2^{n}}\mathbf{b}^{2^{n}}, \mathbf{c}\mathbf{b}\mathbf{c}=\mathbf{b}^{1+2^{n}} \rangle ,\\
	&G_5=\langle \mathbf{a},\mathbf{b},\mathbf{c} \mid (*), \mathbf{c}\mathbf{a}\mathbf{c}=\mathbf{a}^{-1}, \mathbf{c}\mathbf{b}\mathbf{c}=\mathbf{b}^{-1} \rangle ,\\
	&G_6=\langle \mathbf{a},\mathbf{b},\mathbf{c} \mid (*), \mathbf{c}\mathbf{a}\mathbf{c}=\mathbf{a}^{-1+2^{n}}, \mathbf{c}\mathbf{b}\mathbf{c}=\mathbf{b}^{-1+2^{n}} \rangle ,\\
	&G_7=\langle \mathbf{a},\mathbf{b},\mathbf{c} \mid (*), \mathbf{c}\mathbf{a}\mathbf{c}=\mathbf{a}^{-1}\mathbf{b}^{2^{n}}, \mathbf{c}\mathbf{b}\mathbf{c}=\mathbf{b}^{-1} \rangle ,\\
	&G_8=\langle \mathbf{a},\mathbf{b},\mathbf{c} \mid (*), \mathbf{c}\mathbf{a}\mathbf{c}=\mathbf{a}^{-1+2^{n}}\mathbf{b}^{2^{n}}, \mathbf{c}\mathbf{b}\mathbf{c}=\mathbf{b}^{-1+2^{n}} \rangle ,\\
	&G_9=\langle \mathbf{a},\mathbf{b},\mathbf{c} \mid (*), \mathbf{c}\mathbf{a}\mathbf{c}=\mathbf{a}\mathbf{b}^{2^{n}}, \mathbf{c}\mathbf{b}\mathbf{c}=\mathbf{a}^{2^{n}}\mathbf{b}^{1+2^{n}} \rangle ,\\
	&G_{10}=\langle \mathbf{a},\mathbf{b},\mathbf{c} \mid (*), \mathbf{c}\mathbf{a}\mathbf{c}=\mathbf{a}, \mathbf{c}\mathbf{b}\mathbf{c}=\mathbf{b}^{1+2^{n}} \rangle ,\\
	&G_{11}=\langle \mathbf{a},\mathbf{b},\mathbf{c} \mid (*), \mathbf{c}\mathbf{a}\mathbf{c}=\mathbf{a}^{-1}\mathbf{b}^{2^{n}}, \mathbf{c}\mathbf{b}\mathbf{c}=\mathbf{a}^{2^{n}}\mathbf{b}^{-1+2^{n}} \rangle ,\\
	&G_{12}=\langle \mathbf{a},\mathbf{b},\mathbf{c} \mid (*), \mathbf{c}\mathbf{a}\mathbf{c}=\mathbf{a}^{-1}, \mathbf{c}\mathbf{b}\mathbf{c}=\mathbf{b}^{-1+2^{n}} \rangle ,\\
	&G_{13}=\langle \mathbf{a},\mathbf{b},\mathbf{c} \mid (*), \mathbf{c}\mathbf{a}\mathbf{c}=\mathbf{a}, \mathbf{c}\mathbf{b}\mathbf{c}=\mathbf{b}^{-1+2^{n}} \rangle ,\\
	&G_{14}=\langle \mathbf{a},\mathbf{b},\mathbf{c} \mid (*), \mathbf{c}\mathbf{a}\mathbf{c}=\mathbf{a}^{-1}, \mathbf{c}\mathbf{b}\mathbf{c}=\mathbf{b}^{1+2^{n}} \rangle ,\\
	&G_{15}=\langle \mathbf{a},\mathbf{b},\mathbf{c} \mid (*), \mathbf{c}\mathbf{a}\mathbf{c}=\mathbf{b}, \mathbf{c}\mathbf{b}\mathbf{c}=\mathbf{a} \rangle ,\\
	&G_{16}=\langle \mathbf{a},\mathbf{b},\mathbf{c} \mid (*), \mathbf{c}\mathbf{a}\mathbf{c}=\mathbf{a}, \mathbf{c}\mathbf{b}\mathbf{c}=\mathbf{b}^{-1} \rangle ,\\
	&G_{17}=\langle \mathbf{a},\mathbf{b},\mathbf{c} \mid (*), \mathbf{c}\mathbf{a}\mathbf{c}=\mathbf{a}^{1+2^{n}}, \mathbf{c}\mathbf{b}\mathbf{c}=\mathbf{b}^{-1+2^{n}} \rangle ,\\
	\end{align*}
	where (*) denotes the collection \{$\mathbf{a}^{2^{n+1}}=\mathbf{b}^{2^{n+1}}=\mathbf{c}^2=[\mathbf{a},\mathbf{b}]=1$ \} of defining relations.
\end{proposition}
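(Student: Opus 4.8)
The plan is to translate the classification into a purely group-theoretic question about involutory automorphisms of $A := C_{2^{n+1}}^2$. Since $A$ is abelian and $\mathrm{Aut}(C_2)$ is trivial, a split extension \eqref{semi} is determined by the $C_2$-action on $A$, i.e.\ by a homomorphism $C_2 \to \mathrm{Aut}(A) = GL_2(\mathbb{Z}/2^{n+1})$. Concretely this is a matrix $M = \begin{pmatrix} i & k \\ j & l \end{pmatrix}$ recording $\mathbf{c}\mathbf{a}\mathbf{c} = \mathbf{a}^i\mathbf{b}^j$ and $\mathbf{c}\mathbf{b}\mathbf{c} = \mathbf{a}^k\mathbf{b}^l$, subject to $M^2 = I$ (forced by $\mathbf{c}^2 = 1$). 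First I would verify that $A$ is a characteristic subgroup of $G$ (in each case it is recoverable intrinsically, for instance as a maximal abelian normal subgroup of rank two), so that every isomorphism $G \to G'$ carries $A$ onto $A$ and, $\mathrm{Aut}(C_2)$ being trivial, induces the identity on the quotient. Consequently two of these groups are isomorphic if and only if the corresponding matrices $M, M'$ are conjugate in $GL_2(\mathbb{Z}/2^{n+1})$, and the problem becomes: count the conjugacy classes of elements of order dividing $2$ in $GL_2(\mathbb{Z}/2^{n+1})$.

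Next I would classify those classes. The feature that distinguishes this from the field case is that $\mathbb{Z}/2^{n+1}$ is a local ring in which $2$ is not invertible, so one cannot simply split $M$ into $\pm 1$ eigenspaces. Two ingredients organize the list. First, the group of square roots of unity in $(\mathbb{Z}/2^{n+1})^\times$ is the Klein four-group $\{\,1,\,-1,\,1+2^n,\,-1+2^n\,\}$, using $(1+2^n)^2 = 1 + 2^{n+1} + 2^{2n} \equiv 1$ and $(-1+2^n)^2 = 1 - 2^{n+1} + 2^{2n}\equiv 1$ for $n \geq 2$; these four values are exactly the diagonal exponents $1,-1,1+2^n,-1+2^n$ occurring throughout the list. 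Second, a unipotent contribution $I + N$ with $N^2 = 0$ and $2N = 0$ satisfies $(I+N)^2 = I + 2N + N^2 = I$; such an $N$ has entries in $2^n(\mathbb{Z}/2^{n+1})$ and accounts for the off-diagonal $\mathbf{a}^{2^n}$ and $\mathbf{b}^{2^n}$ factors. Combining a semisimple part whose eigenvalues lie among the four roots of unity with a unipotent part as above, and then reducing by conjugation, produces finitely many normal forms, which I would show collapse to exactly $17$ classes.

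The hard part will be precisely this matrix classification over the non-field ring: one must both exhibit $17$ pairwise non-conjugate involutions and prove that every involution is conjugate to one of them, with the delicate cases being the mixed and off-diagonal ones. For instance the swap $M = \begin{pmatrix} 0 & 1 \\ 1 & 0 \end{pmatrix}$ of $G_{15}$ has eigenvalues $\pm 1$ yet is \emph{not} conjugate to $\mathrm{diag}(1,-1)$: already its reduction modulo $2$ is a nontrivial unipotent, whereas $\mathrm{diag}(1,-1)$ reduces to the identity, so the two cannot be conjugate. Separating such classes is exactly where the $2$-adic structure must be handled with care. Once the $17$ conjugacy classes are secured, matching each normal form to a presentation $G_1,\dots,G_{17}$ is a direct transcription through the dictionary $M \leftrightarrow (\mathbf{c}\mathbf{a}\mathbf{c},\mathbf{c}\mathbf{b}\mathbf{c})$, and the pairwise non-isomorphism of the resulting groups follows from the characteristic-subgroup reduction of the first step.
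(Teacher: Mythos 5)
You should first note that the paper contains no proof of this proposition at all: it is imported verbatim from Gramushnjak--Puusemp \cite{G-P}, so your attempt has to stand on its own. Your overall strategy --- split extensions of $C_2$ by $A=C_{2^{n+1}}^2$ correspond to involutions $M\in GL_2(\mathbb{Z}/2^{n+1})$, so classify conjugacy classes of such $M$ --- is the natural one, and several ingredients are sound (the square roots of unity are indeed $\{\pm 1,\pm(1+2^n)\}$ with $-(1+2^n)=-1+2^n$, and your mod-$2$ argument that the swap is not conjugate to $\mathrm{diag}(1,-1)$ is correct). But there are two genuine gaps. The first is that your key reduction rests on a false lemma: $A$ is \emph{not} characteristic in all of these groups, nor is it recoverable as ``the'' maximal abelian normal subgroup of rank two. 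In $G_1$ the whole group is abelian of rank three; more damagingly, in $G_{10}$ the subgroup $B=\langle \mathbf{a},\mathbf{b}\mathbf{c}\rangle$ is a \emph{second} abelian normal subgroup of index $2$ with $B\cong C_{2^{n+1}}^2$ (since $(\mathbf{b}\mathbf{c})^2=\mathbf{b}^{2+2^n}$ and $2+2^n$ is twice a unit, $\mathbf{b}\mathbf{c}$ has order $2^{n+1}$), and likewise $\langle\mathbf{b},\mathbf{a}\mathbf{c}\rangle$ in $G_3$. Hence ``isomorphic $\Rightarrow$ conjugate matrices'' does not follow from your first step. To rescue it you would have to enumerate, in each of the $17$ groups, \emph{all} index-$2$ subgroups isomorphic to $C_{2^{n+1}}^2$ and verify that each induced $C_2$-action is conjugate to the defining one (this happens to be true --- e.g.\ conjugation by $\mathbf{b}$ on $B$ above again has matrix $\mathrm{diag}(1,1+2^n)$ --- but it is exactly the case-by-case labor your lemma was meant to bypass); the alternative, which is what \cite{G-P} actually does, is to distinguish the $17$ groups pairwise by explicit group-theoretic invariants.

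The second gap is that the matrix classification itself is only promised, and your proposed normal forms are provably incomplete: any product of a diagonal matrix with root-of-unity entries and a unipotent $I+N$ with $2N=N^2=0$ has all entries of $N$ in $2^n(\mathbb{Z}/2^{n+1})$ and hence reduces to the identity mod $2$, so by your own mod-$2$ observation the swap class of $G_{15}$ is unreachable within your scheme; you flag the swap as delicate but never reconcile it with the claimed list, and ``which I would show collapse to exactly $17$ classes'' is precisely the hard content, omitted. The count can in fact be organized cleanly, which shows what is missing: if $M\not\equiv I \bmod 2$, pick $v$ with $Mv\not\equiv v \bmod 2$; then $(v,Mv)$ is a basis by Nakayama and $M$ is conjugate to $\bigl(\begin{smallmatrix}0&1\\1&0\end{smallmatrix}\bigr)$ --- one class, $G_{15}$. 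If $M\equiv \pm I \bmod 2^n$, write $M=\pm(I+2^nW)$; then $M^2=I$ holds automatically, and conjugacy of such $M$ corresponds exactly to $GL_2(\mathbb{F}_2)$-conjugacy of $W\bmod 2$, whose $6$ classes (two scalars plus the four characteristic polynomials $t^2$, $t^2+1$, $t^2+t$, $t^2+t+1$) yield the twelve classes $G_1,\dots,G_{12}$ minus the mixed-diagonal ones; in the remaining case one shows $M$ is diagonalizable with one eigenvalue in $\{1,1+2^n\}$ and one in $\{-1,-1+2^n\}$, giving $G_{13},G_{14},G_{16},G_{17}$, for a total of $1+12+4=17$. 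As submitted, neither completeness of your normal forms nor their pairwise non-conjugacy is established --- note that trace and determinant do not suffice, since the matrices of $G_9$ and $G_{10}$ share the characteristic polynomial $(t-1)(t-1-2^n)$, so a finer invariant such as $\ker(M-I)$ or $W\bmod 2$ is genuinely needed.
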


\bigskip

In \cite{B1} we proved \begin{theorem}
	\label{1}
	Let $H_i$ and $G_i$ be finite $p$-groups, $i=1,\dots, n$, such that $H_i$ is good and  $G_i$ fits into an extension $1\to H_i\to G_i\to C_p \to 1$.
	
	Let $G$ fit into an extension of the form $1\to H \to G\to C_p \to 1$, with diagonal action of $C_p$ by conjugation on $H=H_1 \times\cdots\times H_n $.  Denote by
	$$
	Tr^*=Tr^*_{\varrho}: K(s)^*(BH)\to K(s)^*(BG),
	$$
	the transfer homomorphism associated to the $p$-covering $\varrho=\varrho(H,G):BH \to BG$,
	$$
	Tr_i^*=Tr^*_{\varrho_i}:K(s)^*(BH_i)\to K(s)^*(BG_i),
	$$
	the transfer homomorphism associated to the $p$-covering  $\varrho_i=\varrho(H_i,G_i):BH_i \to BG_i$, $i=1,\dots ,n$,
	$$
	\rho_i:BG\to BG_i ,
	$$
	the map, induced by the projection $H \to H_i$ on the $i$-th factor,
	and let $\rho^*$ be the restriction of
	$$(\rho_1,\cdots ,\rho_n)^*:K(s)^*(BG_1\times \cdots \times BG_n)\to K(s)^*(BG)$$
	on $K(s)^*(BG_1)/Im Tr^*_1 \otimes \cdots \otimes K(s)^*(BG_n)/ImTr^*_n$.  Then
	
	\begin{enumerate}
		\item[\rm i)] If $G_i$ are good and so is $G$.
		
		\item[\rm ii)] $K(s)^*(BG)$ is spanned, as a $K(s)^*(pt)$-module,
		by the elements of $ImTr^*$ and $Im\rho^*$.
	\end{enumerate}
\end{theorem}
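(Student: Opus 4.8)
The plan is to treat (i) as a formal consequence of (ii) and to concentrate the real work on the spanning statement (ii).

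\emph{Reduction of (i) to (ii).} Granting (ii), every class of $K(s)^*(BG)$ is a $K(s)^*$-combination of elements of $\mathrm{Im}\,Tr^*$ and of $\mathrm{Im}\,\rho^*$, so it suffices to check that each such element is good. For $\mathrm{Im}\,Tr^*$: since $H$ is good, any $x\in K(s)^*(BH)$ is a combination of classes $Tr^{H}_{K}(e(\theta))$ with $K<H$, and transitivity of the transfer gives $Tr^*_\varrho\,Tr^{H}_{K}(e(\theta))=Tr^{G}_{K}(e(\theta))$, a transferred Euler class for $K<G$. For $\mathrm{Im}\,\rho^*$: each $\rho_i$ is induced by a surjection $G\to G_i$, so transfer base change along the pullback square of groups yields $\rho_i^*\,Tr^{G_i}_{K_i}(e(\theta))=Tr^{G}_{\rho_i^{-1}(K_i)}\big(e(\rho_i^*\theta)\big)$, again a transferred Euler class in $BG$; products of such classes are good by the projection formula together with $e(\alpha)e(\beta)=e(\alpha\oplus\beta)$ and the Künneth multiplicativity of the transfer. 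As $K(s)^*(BG_i)$ is spanned by good elements, so is $\mathrm{Im}\,\rho^*$, and (i) follows.

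\emph{Reformulating (ii).} Because the $C_p$-action on $H$ is the product action, each projection $H\to H_i$ is $C_p$-equivariant and extends to a surjection $G\to G_i$; assembling these identifies $G$ with the iterated fibre product $G_1\times_{C_p}\cdots\times_{C_p}G_n$, that is, with the normal subgroup of $\Gamma:=G_1\times\cdots\times G_n$ of index $p^{n-1}$ given by the preimage of the diagonal under $\Gamma\to\Gamma/H=C_p^n$. Thus $(\rho_1,\dots,\rho_n)$ is exactly the restriction map for $G<\Gamma$, and by Künneth $K(s)^*(B\Gamma)=\bigotimes_i K(s)^*(BG_i)$. A short double coset computation, using $G\cdot\big(H_i\times\prod_{j\ne i}G_j\big)=\Gamma$ and $G\cap\big(H_i\times\prod_{j\ne i}G_j\big)=H$, shows that the $i$-th transfer summand restricts along $G<\Gamma$ into $\mathrm{Im}\,Tr^*$. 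Hence the summands $\mathrm{Im}\,Tr_i^*$ are absorbed, and (ii) is equivalent to the cleaner statement
\[
K(s)^*(BG)=\mathrm{Im}\,Tr^*+\mathrm{Im}\,(\rho_1,\dots,\rho_n)^*.
\]

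\emph{Proving the spanning.} I would compare $BG$ and $B\Gamma$ through their common presentation as fibrations over classifying spaces of the quotient, namely $BH\to B\Gamma\to B(C_p^n)$ and $BH\to BG\to BC_p$, with $BG$ the pullback of $B\Gamma$ along the diagonal $BC_p\to B(C_p^n)$. The target is the base change isomorphism
\[
K(s)^*(BG)\;\cong\;K(s)^*(B\Gamma)\otimes_{K(s)^*(BC_p^n)}K(s)^*(BC_p),
\]
under which $(\rho_1,\dots,\rho_n)^*$ is the inclusion of $K(s)^*(B\Gamma)\otimes 1$, while the classes produced by the truncation $y_i\mapsto y$, $y^{p^s}=0$, are accounted for by $\mathrm{Im}\,Tr^*$. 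Equivalently, one runs the Morava $K$-theory Serre spectral sequence of $BH\to BG\to BC_p$, using $\varrho^*Tr^*=N$ (the norm) and $Tr^*\varrho^*=p=0$, and identifies the non-transfer part of $K(s)^*(BG)$ with lifts of $K(s)^*(BH)^{C_p}$ modulo norms. To match these lifts with $\mathrm{Im}\,(\rho_1,\dots,\rho_n)^*$ one invokes the special bases of Lemma~\ref{specialbase} and Lemma~\ref{invariants}: writing $K(s)^*(BH)=\bigotimes_i K(s)^*(BH_i)$ with the product action and decomposing each factor into indecomposable $C_p$-modules, the invariants-modulo-norms of the tensor product are spanned by external products of the factorwise invariants-modulo-norms, which are realized precisely by $(\rho_1,\dots,\rho_n)^*$ of the corresponding lifts from the $G_i$.

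The main obstacle is this final matching, i.e.\ controlling how $K(s)^*(BG)$ is assembled from $K(s)^*(B\Gamma)$ and the diagonal, whether one phrases it as collapse of the Eilenberg--Moore (or Serre) spectral sequence or as the assertion that the non-transfer part is exactly $\mathrm{Im}\,(\rho_1,\dots,\rho_n)^*$. This is where the prime $p=2$ enters decisively: over $\mathbb{F}_2$ the tensor product of any $C_2$-module with the free module $\mathbb{F}_2[C_2]$ is again free, so the operation ``invariants modulo norms'' is multiplicative under tensor products and the external products above do span. It is exactly this multiplicativity that the special-basis lemmas make explicit, and exactly what fails at odd primes, consistently with the known odd-primary counterexamples to goodness.
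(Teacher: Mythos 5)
First, a point of context: the paper does not prove Theorem~\ref{1} at all — it is quoted from \cite{B1} — so your attempt has to stand on its own rather than be matched line-by-line against an in-paper argument. Your formal reductions are correct and cleanly done: goodness of $\mathrm{Im}\,Tr^*$ via transitivity of the transfer (using that $H$, a product of good groups, is good), goodness of $\mathrm{Im}\,\rho^*$ via base change of the transfer along the surjections $G\to G_i$ and closure of transferred Euler classes under products (Frobenius reciprocity plus the Mackey formula), the identification of $G$ with the preimage of the diagonal in $\Gamma=G_1\times\cdots\times G_n$, and the double-coset computation $res^{\Gamma}_{G}\,Tr^{\Gamma}_{K_i'}=Tr^{G}_{H}\,res^{K_i'}_{H}$ (valid since $K_i'=H_i\times\prod_{j\neq i}G_j$ is normal of index $p$ and $G\cap K_i'=H$), which correctly absorbs the $\mathrm{Im}\,Tr_i^*$ factors and reduces (ii) to $K(s)^*(BG)=\mathrm{Im}\,Tr^*+\mathrm{Im}\,(\rho_1,\dots,\rho_n)^*$.

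The genuine gap is the spanning step itself, which you flag as ``the main obstacle'' but do not close, and the tools you reach for cannot close it. (1) The base-change isomorphism $K(s)^*(BG)\cong K(s)^*(B\Gamma)\otimes_{K(s)^*(BC_p^n)}K(s)^*(BC_p)$ is an Eilenberg--Moore-type collapse that is not a general property of Morava $K$-theory and is asserted without proof. (2) The Serre spectral sequence route needs more than the $E_2$-level statement that invariants-modulo-norms of the tensor product are spanned by external products of factorwise invariants-modulo-norms: you must show that the invariants which survive to $E_\infty$ for $G$ are restrictions of classes in $\mathrm{Im}\,\rho^*$ — i.e., that survival is compatible with the factorization (naturality along $\rho_i$ makes products of permanent cycles permanent, but does not rule out extra permanent invariants, nor settle positive-filtration and extension problems). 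Some counting or induction (compare the Euler-characteristic argument the paper uses for Proposition~\ref{T}~ii)) is needed here and is absent. (3) Invoking Lemma~\ref{specialbase} and Lemma~\ref{invariants} is out of scope, indeed circular in spirit: those lemmas are proved only for the specific modules $K(s)^*(BC_{2^{n+1}}^2)$ with the six particular involutions, whereas Theorem~\ref{1} concerns arbitrary good $H_i$; no such special bases are available in that generality. (4) Most seriously, your mechanism is confined to $p=2$ (the Krull--Schmidt decomposition of $\mathbb{F}_2[C_2]$-modules into trivial and free summands, which makes $\hat{H}^0$ multiplicative), while Theorem~\ref{1} is stated, and proved in \cite{B1}, for all primes $p$; your closing claim that the mechanism ``fails at odd primes, consistently with the known odd-primary counterexamples'' conflates Kriz's counterexamples (specific non-good groups) with this theorem, which does hold at odd primes — so the argument as proposed cannot prove the stated result even in outline, and the odd-$p$ case requires a different idea (e.g., exploiting that $\mathrm{Im}\,Tr^*$ is an ideal containing far more than lifts of norms).
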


In particular this implies

\begin{corollary}
	\label{easy-case}
	Let $G=G_i$, $i\neq 3,4,7,8,9,11$,
	then $G$ is good in the sense of Hopkins-Kuhn-Ravenel.
\end{corollary}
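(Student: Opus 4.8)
The plan is to dispose of these eleven groups using the reduction already available, namely Theorem \ref{1}, together with the known goodness of groups of order $2^{n+2}$, and to treat the one genuinely different case by a classical wreath-product argument. The first step is purely bookkeeping: reading off the conjugation formulas in Proposition \ref{Euler}, I would sort the groups $G_i$ with $i\neq 3,4,7,8,9,11$ into two families according to how $\mathbf{c}$ acts on $H=\langle\mathbf{a}\rangle\times\langle\mathbf{b}\rangle\cong C_{2^{n+1}}^2$. For $i\in\{1,2,5,6,10,12,13,14,16,17\}$ conjugation by $\mathbf{c}$ carries $\langle\mathbf{a}\rangle$ into itself and $\langle\mathbf{b}\rangle$ into itself, i.e. the action is diagonal with respect to the splitting $H=H_1\times H_2$, $H_1=\langle\mathbf{a}\rangle$, $H_2=\langle\mathbf{b}\rangle$; the only remaining case $i=15$ is the swap $\mathbf{c}\mathbf{a}\mathbf{c}=\mathbf{b}$, $\mathbf{c}\mathbf{b}\mathbf{c}=\mathbf{a}$, which does not preserve the factors. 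The excluded indices $3,4,7,8,9,11$ are precisely those for which the image of $\mathbf{a}$ or of $\mathbf{b}$ genuinely mixes the two factors, so neither family applies; these are left for the body of the paper.

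For the diagonal family I would invoke Theorem \ref{1} with $H_1=\langle\mathbf{a}\rangle$ and $H_2=\langle\mathbf{b}\rangle$, both isomorphic to $C_{2^{n+1}}$ and hence good, being abelian. The rank-one subquotients supplied by that theorem are the two extensions $1\to C_{2^{n+1}}\to \Gamma_j\to C_2\to 1$, $j=1,2$, on which $\mathbf{c}$ acts by the automorphism $x\mapsto x^{r}$ with $r\in\{1,-1,1+2^{n},-1+2^{n}\}$ read off from the relevant relation. These four residues are exactly the elements of order dividing $2$ in $(\mathbb{Z}/2^{n+1})^{\times}$, so each $\Gamma_j$ is a group of order $2^{n+2}$ of one of the familiar types: abelian, dihedral, generalized quaternion, semidihedral, or modular maximal-cyclic. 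The goodness of all of these is known (see \cite{SCHY}, \cite{SCH1}); citing it, Theorem \ref{1}(i) yields that every $G_i$ in this family is good.

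The single case $i=15$ is handled separately because its action is not diagonal and hence falls outside Theorem \ref{1}. Here $G_{15}$ is exactly the wreath product $C_{2^{n+1}}\wr C_2$, and I would finish by quoting the classical fact (\cite{HKR}) that $H\wr C_2$ is good whenever $H$ is, applied to the good group $H=C_{2^{n+1}}$. The main obstacle in this corollary is not any of the above reductions, which are short, but rather having the goodness of the order-$2^{n+2}$ subquotients available for all $n$ rather than only for $n=1$; once that input and the wreath-product fact are in hand, the sorting of the presentations together with the two appeals to Theorem \ref{1} and to the wreath-product result complete the argument. By contrast, the indices $3,4,7,8,9,11$ resist this scheme precisely because the factor-mixing destroys the product structure that both inputs require, which is why they demand the new invariant-theoretic computation of the paper.
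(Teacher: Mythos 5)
Your argument is essentially the paper's own proof: the ten diagonal-action groups are dispatched by Theorem \ref{1} applied to $H=\langle\mathbf{a}\rangle\times\langle\mathbf{b}\rangle$ together with the known goodness of the split extensions $C_{2^{n+1}}\rtimes C_2$ (trivial, dihedral, quasi-dihedral/modular, and semi-dihedral types), and $G_{15}$ is good as the wreath product $C_{2^{n+1}}\wr C_2$ by \cite{HKR}. Two small corrections: a generalized quaternion group cannot occur among your subquotients $\Gamma_j$, since $Q_{2^{n+2}}$ has a unique involution and hence is not a split extension of $C_2$ by $C_{2^{n+1}}$; and for general $n$ the goodness of the order-$2^{n+2}$ groups should be cited from \cite{TY1,TY2} (as the paper does) rather than \cite{SCH1}, \cite{SCHY}, which treat only groups of order 32 and extraspecial $2$-groups.
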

\begin{proof} $G_{15}$ is good as wreath product \cite{HKR}. Otherwise   
	  $G_i$ has maximal abelian subgroup $H_i=\langle \mathbf{a},\mathbf{b}\rangle$ on which the quotient acts (diagonally) as above. Each of the following groups $C_{2^{n+1}}\times C_2$, the dihedral group $D_{2^{n+2}}$, the quasi-dihedral group $QD_{2^{n+2}}$, the semi-dihedral group $SD_{2^{n+2}}$  could be written as semidirect product $C_{2^{n+1}}\rtimes C_2$ with that kind of action.  For all these  groups $K(s)^*(BG)$ is generated by Euler classes, see \cite{TY1, TY2}.  
\end{proof}

\bigskip

\medskip

\medskip

We will need the following approximations (see \cite{BV}, Lemma 2.2) for the formal group law in Morava $K(s)^*$-theory, $s>1$, we set $v_s=1$. 

\begin{equation}
\label{FGL2^s}
F(x,y)=x+y+(xy)^{2^{s-1}}, \,\,\,\mod{(y^{2^{2(s-1)}})}; 
\end{equation}

\begin{equation}
\label{FGL}
F(x,y)=x+y+\Phi(x,y)^{2^{s-1}},
\end{equation}
where $\Phi(x,y)=xy+(xy)^{2^{s-1}}(x+y)\,\,\,  \mod((xy)^{2^{s-1}}(x+y)^{2^{s-1}}).$

\bigskip

\section{Complex representations over $BG$}

Let us define some complex representations over $BG$ we will need.

\medskip

Let $H=\langle \mathbf{a},\mathbf{b}\rangle\cong C_{2^{n+1}} \times C_{2^{n+1}} $ be the maximal abelian subgroup in $G$.
Let 
\begin{equation}
\label{pi}
\pi:BH \rightarrow BG
\end{equation}
be the double covering.
Let $\lambda$ and $\nu$ denote the following complex line bundles over $BH$ 
$$\lambda(\mathbf{a})=\nu(\mathbf{b})=e^{2\pi i/2^{n+1} },\,\,\, \lambda(\mathbf{b})=\lambda(\mathbf{c})=\nu(\mathbf{a})=\nu(\mathbf{c})=1,$$
be the pullbacks of the canonical complex line bundles along the projections onto the first and second  factor of $H$ respectively.
Let 
$$\pi_!(\lambda)=Ind^G_H(\lambda) \text{ and } \,\, \pi_!(\nu)=Ind^G_H(\nu)$$ 
be the plane bundles over $BG$, the transferred $\lambda$ and $\nu$ respectively. Then define three line bundles over $BG$,
$\alpha$, $\beta$ and $\gamma$ as follows

$$\alpha(\mathbf{a})=\beta(\mathbf{b})=\gamma(\mathbf{c})=-1,\,\,\,\alpha(\mathbf{b})=\alpha(\mathbf{c})=\beta(\mathbf{a})=
\beta(\mathbf{c})=\gamma(\mathbf{a})=\gamma(\mathbf{b})=1.$$

Let us denote Chern classes by

\begin{align*}
&x_i=c_i(Ind_H^G(\lambda)), \,\,\,y_i=c_i(Ind_H^G)(\nu)),\,\,\,i=1,2,\\
&a=c_1(\alpha),\,\,\,b=c_1(\beta),\,\,\,c=c_1(\gamma).\\
\end{align*}

\medskip

\bigskip

\section{Proof of Theorem \ref{main}}

Here we prove that all the remaining groups $G_i$, $i=3,4,7,8,9,11$,
not covered by Corollary \ref{easy-case},
are also good.

Our tool shall be the Serre spectral sequence
\begin{equation}
E_2=H^*(BQ),K(s)^*(BH))\Rightarrow K(s)^*(BG)
\end{equation}
associated to a group extension $1\to H \xrightarrow[]{}G \xrightarrow[]{} C_2\to 1$. Here $H^*(BC_2),K(s)^*(BH))$ denotes the ordinary cohomology of $BC_2$ with coefficients in the $\mathbb{F}_2[C_2]$-module $K(s)^*(BH)$, where the action of $C_2$
is induced by conjugation in $G$.

Let $Tr^*: K(s)^*(BH)\to K(s)^*(BG)$ be the transfer homomorphism \cite{A},\cite{KP}, \cite{D}. associated to the double covering $\pi: BH \to BG$. 

\bigskip

 We use the notations of previous two sections. In particular let
	 $$H\cong C_{2^{n+1}}\times C_{2^{n+1}}\cong \langle \mathbf{a},\mathbf{b}\rangle.$$ 

Consider the decomposition
\begin{equation}
\label{free-trivial}
[K(s)^*(BH)]^{C_2}=[F]^{C_2}+T,
\end{equation}
 corresponding to the decomposition of  $K(s)^*(BH)$ into free and trivial $C_2$-modules. The action of the involution $t\in C_2$ on
\begin{equation}
\label{H}
K(s)^*(BH)=K(s)^*[u,v]/(u^{2^{(n+1)s}},v^{2^{(n+1)s}})
\end{equation}
is induced by conjugation action by $\mathbf{c}$ on $H$. 
Clearly the composition $\pi^*Tr^*=1+t$, the trace map, is onto $[F]^{C_2}$. Therefore it suffices to check that all  invariants in $T$ are also represented by good elements. 
	
For all cases of $G$ let  
\begin{align*}
u=&e(\lambda),\,\,\, && v=e(\nu),\text{ as before},\\
\bar{x}_1=&u+t(u)=\pi^*(x_1),
&&\bar{x}_2=ut(u)=\pi^*(x_2),\\
\bar{y}_1=&v+t(v)=\pi^*(y_1),
&&\bar{y}_2=vt(v)=\pi^*(y_2).\\
\end{align*}



\medskip

We will need the following

\begin{lemma}
\label{specialbase}
Let $G$ be one of the groups under consideration and $t\in C_2= G/H$ be corresponding involution on $H$. Then there is a set of monomials
$\{x^{\omega}\}=\{\bar{x}_1^i\bar{x}_2^j\bar{y}_1^k\bar{y}_2^l\}$, such that 
the set $\{x^{\omega},x^{\omega} u,x^{\omega} v, x^{\omega} uv\}$ is a $K(s)^*$-basis
in $K(s)^*(BH)$.  In particular one can choose $\{x^{\omega}\}$ as follows

\begin{equation*}
\{x^{\omega}\}=
\begin{cases}
\{\bar{x}_2^j\bar{y}_1^k\bar{y}_2^{l}| j<2^{ns-1}, k<2^s,l<2^{(n+1)s-1}\},& \text{ if }\,\,\, G= G_3,\\ 
\{\bar{x}_1^i\bar{x}_2^j\bar{y}_1^k\bar{y}_2^l| i,k<2^s,j,l<2^{ns-1} \}, & \text{ if }\,\,\, G= G_4, G_9,\\
\{\bar{x}_1^i\bar{x}_2^j\bar{y}_1^k\bar{y}_2^l| i,k<2^{ns},j,l<2^{s-1} \}, & \text{ if  }\,\,\, G= G_7, G_8, G_{11}.\\
\end{cases}
\end{equation*}

\end{lemma}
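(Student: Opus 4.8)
The plan is to prove the claim by comparing initial forms against the standard monomial basis in the associated graded ring. Write $N=2^{(n+1)s}$, so that \eqref{H} has $K(s)^*$-basis $\{u^pv^q:0\le p,q<N\}$ and dimension $N^2=2^{2(n+1)s}$. In each of the three displayed cases a direct count gives $|\{x^\omega\}|=2^{2(n+1)s-2}$, so the proposed family $\{x^\omega,x^\omega u,x^\omega v,x^\omega uv\}$ has exactly $4\cdot 2^{2(n+1)s-2}=2^{2(n+1)s}$ elements; since this matches the dimension it suffices to prove linear independence. For that I would filter \eqref{H} by cohomological degree and work in the associated graded ring, again $K(s)^*[u,v]/(u^N,v^N)$: it is enough that the initial forms (lowest–degree parts) of the proposed elements be linearly independent.

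The computational core is to determine the initial forms of $\bar x_1,\bar x_2,\bar y_1,\bar y_2$. Since $u=e(\lambda)$, $v=e(\nu)$ and $t$ is induced by conjugation by $\mathbf{c}$, we have $t(u)=e(\mathbf{c}^*\lambda)$ and $t(v)=e(\mathbf{c}^*\nu)$, and Proposition \ref{Euler} identifies $\mathbf{c}^*\lambda,\mathbf{c}^*\nu$ as explicit monomials in $\lambda,\nu$. Passing through the orientation ($e(\xi\otimes\eta)=F(e\xi,e\eta)$ and $e(\xi^{\otimes m})=[m](e\xi)$, with $[m]$ the $m$–series of $F$) and the approximation \eqref{FGL}, the two facts needed are $[2^{n}](u)\equiv u^{2^{ns}}$ and, when the action inverts, $u+[-1](u)\equiv u^{2^{s}}$ (with $[-1]$ the formal inverse), both modulo higher degree. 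Because $t$ fixes $u$ modulo higher degree in every case, $\bar x_2=u\,t(u)\equiv u^{2}$ and $\bar y_2=v\,t(v)\equiv v^{2}$ throughout. For the remaining generators one finds: $\bar x_1\equiv 0$ and $\bar y_1\equiv u^{2^{ns}}$ for $G_3$; the monomials $\bar x_1\equiv u^{2^{s}}$, $\bar y_1\equiv v^{2^{s}}$ for the inverting groups $G_7,G_8,G_{11}$ (the competing shift terms $u^{2^{ns}},v^{2^{ns}}$ being of strictly higher degree, as $s<ns$); and $\bar x_1\equiv u^{2^{ns}}$ (respectively $v^{2^{ns}}$ for $G_9$) together with the genuine binomial $\bar y_1\equiv u^{2^{ns}}+v^{2^{ns}}$ for $G_4,G_9$.

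I would then observe that all these initial forms lie in the even subalgebra $K(s)^*[u^2,v^2]/(u^N,v^N)$, whose dimension $(N/2)^2=2^{2(n+1)s-2}$ equals $|\{x^\omega\}|$, and show that the initial forms of the $x^\omega$ form a basis of it; multiplying by $1,u,v,uv$ then spreads this basis across the four parity classes of $u^pv^q$ and so yields a basis of the whole graded ring. Setting $U=u^{2}$, $V=v^{2}$, the initial form of $x^\omega$ is, in the monomial cases $G_3,G_7,G_8,G_{11}$, a single monomial $U^\alpha V^\beta$ whose exponents are a mixed–radix enumeration forced by the stated ranges to run bijectively over $\{0,\dots,N/2-1\}^2$; this settles these cases at once. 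For $G_4,G_9$ the factor contributed by $\bar y_1$ is $(U^{M}+V^{M})^{k}$ with $M=2^{ns-1}$, and I would finish by a secondary triangular change of basis: this factor has extreme term $V^{Mk}$ (in $V$–degree) for $G_4$ and $U^{Mk}$ (in $U$–degree) for $G_9$, so ordering the standard monomials of the even subalgebra by that degree makes the transition matrix unitriangular, hence invertible.

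The main obstacle is precisely this last step for $G_4$ and $G_9$: there the initial form of $\bar y_1$ is the binomial $u^{2^{ns}}+v^{2^{ns}}$ rather than a monomial, so no single leading–monomial order works (it would collide $\bar x_1$ with $\bar y_1$), and the binomial must be resolved by the triangular substitution above. Alongside this one must check that truncation causes no harm: the diagonal monomial $U^{Mi+j}V^{Mk+l}$ (for $G_4$; similarly for $G_9$) always has both exponents below $N/2$ and hence survives, while the off–diagonal monomials are only ever killed by the relations $U^{N/2}=V^{N/2}=0$, which preserves unitriangularity. The stated bounds—for instance $i,k<2^{s}$, $j,l<2^{ns-1}$ for $G_4,G_9$, with the analogous ranges in the other cases—are exactly what make the diagonal exponents stay in range and the enumerations bijective, so once the initial forms are pinned down the remaining combinatorics is forced.
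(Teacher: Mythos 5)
Your argument is correct, but it runs in the opposite direction from the paper's. The paper proves \emph{spanning}: it first notes that without the exponent restrictions the set $\{x^{\omega},x^{\omega}u,x^{\omega}v,x^{\omega}uv\}$ generates, via the rewriting rules $u^2=u\bar{x}_1-\bar{x}_2$, $v^2=v\bar{y}_1-\bar{y}_2$ and \eqref{v^2^m}, and then justifies each exponent bound by deriving explicit relations from the action formulas and \eqref{FGL2^s}, \eqref{FGL} --- e.g.\ for $G_3$ that $\bar{x}_1=0$, $\bar{y}_1^{2^s}=0$, $\bar{y}_2^{2^{(n+1)s-1}}=0$, and that $\bar{x}_2^{2^{ns-1}}$ decomposes as $\bar{y}_1+v\bar{y}_1^{2^s-1}+(\text{terms in }\bar{y}_1,\bar{y}_2)$, with analogous decompositions of $\bar{x}_2^{2^{ns-1}}$, $\bar{y}_2^{2^{ns-1}}$ (resp.\ $\bar{x}_2^{2^{s-1}}$, $\bar{y}_2^{2^{s-1}}$) in the other cases --- and finally concludes by the cardinality count $4^{(n+1)s}=\operatorname{rank}K(s)^*(BH)$, exactly as you conclude from the same count after proving \emph{linear independence} via leading terms in the associated graded ring. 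Both halves of the dichotomy (spanning $+$ count, independence $+$ count) are legitimate, and your leading-term computations check out against the paper's formulas for $t(u)$, $t(v)$: in every case $t(u)\equiv u$ modulo higher degree, giving $\bar{x}_2\equiv u^2$, $\bar{y}_2\equiv v^2$; the monomial initial forms for $G_3,G_7,G_8,G_{11}$ and the binomial $\bar{y}_1\equiv u^{2^{ns}}+v^{2^{ns}}$ for $G_4,G_9$ agree with \eqref{xyG4-1}--\eqref{xyG4-2} and the $F(\bar{u},\cdot)$, $F(\bar{v},\cdot)$ formulas; and your unitriangularity-by-$V$-degree (resp.\ $U$-degree) device correctly resolves the binomial case, which is the one genuine subtlety your route creates and the paper's route never encounters (it only needs decomposability of the boundary powers, not which monomial leads). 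Note that your inequality $s<ns$ quietly uses $n\geq 2$, which is consistent with Proposition \ref{Euler} and the scope of the paper ($n=1$ being previously known). What you lose relative to the paper is reusable output: the explicit congruences produced in the paper's proof, such as $\bar{x}_2^{2^{ns-1}}\equiv v\bar{y}_1^{2^s-1}$ and $\bar{y}_2^{2^{s-1}}\equiv u\bar{x}_1^{2^{ns}-1}\bmod\operatorname{Im}(1+t)$, are exactly what Proposition \ref{T} later needs to match the invariants in $T'$ with $\pi^*$-images of Euler classes, so in the economy of the whole paper the constructive proof does double duty; what you gain is robustness (only lowest-degree terms of the formal group law matter, no exact bookkeeping of the higher corrections) and a transparent combinatorial explanation --- the mixed-radix bijection onto $\{0,\dots,2^{(n+1)s-1}-1\}^2$ --- of why precisely these exponent ranges appear.
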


\begin{proof} If ignore the restrictions, the set 	$\{x^{\omega},x^{\omega} u, x^{\omega} v, x^{\omega} uv\}$, generates $K(s)^*(BH)$: using $u^2=u\bar{x_1}-\bar{x_2}$ and $v^2=v\bar{y_1}-\bar{y_2}$ any polynomial in $u,v$ can be written as  $g_0+g_1u+g_2v+g_3uv$ where $g_i= 
g_i(\bar{x_1},\bar{y_1},\bar{x_2},\bar{y_2})$ are some polynomials. In particular it follows by induction, that
\begin{equation}
\label{v^2^m}
u^{2^{m}}=u\bar{y_1}^{2^{m}-1}+
\sum_{i=1}^{m}\bar{y_1}^{2^{m}-2^i}\bar{y_2}^{2^{i-1}},
\end{equation}

and similarly for $v^{2^m}$. 

Now for each case we have to explain the restrictions in $\{x^{\omega}\} $.
Then the restricted set $S=\{x^{\omega}, x^{\omega} u, x^{\omega} v, x^{\omega} uv \}$ will  indeed form a $K^*(s)$-basis in $K^*(s)(BH)$ because of its size $4^{(n+1)s}$.  

Consider $G_3$. For the restrictions on $l$ and $k$ we have to take into account \eqref{FGL2^s}, \eqref{H} and the action of the involution $t$.  
In particular 


\begin{align*}
&t(\lambda)=\lambda ,\,\,\,\, t(\nu)=\lambda^{2^n}\nu,\\
&t(u)=u,\\
&\Rightarrow \bar{x}_1=u+t(u)=0,\\
&\bar{x}_2=ut(u)=u^2,\\
&t(v)=F(u^{2^{ns}},v)=v+u^{2^{ns}}+(vu^{2^{ns}})^{2^{s-1}},\\
&\Rightarrow \bar{y}_2^{2^{(n+1)s-1}}=0\\
&\bar{y}_1=v+t(v)=u^{2^{ns}}+(vu^{2^{ns}})^{2^{s-1}},\\
&\Rightarrow \bar{y}_1^{2^{s}}= 0.\\
\end{align*}

For the restriction on $j$, that is, the decomposition of $\bar{x}_2^{2^{ns-1}}$ in the suggested basis, note that the formula for $t(v)$ and \eqref{v^2^m} for $m=s-1$ imply

 \begin{align*}
 \bar{x}_2^{2^{ns-1}}=&u^{2^{ns}}=\bar{y}_1+(vu^{2^{ns}})^{2^{s-1}}=\\
 &\bar{y}_1+v^{2^{s-1}}(\bar{y}_1+(vu^{2^{ns}})^{2^{s-1}})^{2^{s-1}}\\
 &\bar{y}_1+v^{2^{s-1}}\bar{y}_1^{2^{s-1}}=\\
 &\bar{y}_1+\bar{y}_1^{2^{s-1}}(v\bar{y_1}^{2^{s-1}-1}+
 \sum_{i=1}^{s-1}\bar{y_1}^{2^{s-1}-2^i}\bar{y_2}^{2^{i-1}})=\\
 &\bar{y}_1+v\bar{y}_1^{2^{s}-1}+
 \bar{y}_1^{2^{s-1}}\sum_{i=1}^{s-1}\bar{y_1}^{2^{s-1}-2^i}\bar{y_2}^{2^{i-1}}.\\
 \end{align*}

 \medskip
 
$ G_4 $: The involution acts as follows:
$t(\lambda)=\lambda^{2^n+1},\,\,\,t(\nu) =\lambda^{2^n}\nu^{2^{n}+1}$, hence

\begin{align}
\label{xyG4-1}
&t(u)=F(u,u^{2^{ns}})=u+u^{2^{ns}}+(uu^{2^{ns}})^{2^{s-1}} \,\,\,\text{ by }\,\,\, \eqref{FGL2^s},\\
\label{xyG4-2}	&t(v)=F(v,F(v^{2^{ns}},u^{2^{ns}}))=v+F(v^{2^{ns}},u^{2^{ns}})+	v^{2^{s-1}}F(v^{2^{ns}},u^{2^{ns}}))^{2^{s-1}}.
\end{align}

So that $\bar{x}_1^{2^s}=\bar{y}_1^{2^s}=0$.  

For the decomposition of $\bar{x}_2^{2^{ns-1}}$, note \eqref{xyG4-1} implies 
$$\bar{x}_2^{2^{ns-1}}=(ut(u))^{2^{ns-1}}=u^{2^{ns}}.$$
Then by \eqref{xyG4-1} again
$$
\bar{x}_2^{2^{ns-1}}=\bar{x}_1+(u\bar{x}_2^{2^{ns-1}})^{2^{s-1}}=
\bar{x}_1+(u(\bar{x}_1+(u\bar{x}_2^{2^{ns-1}})^{2^{s-1}}))^{2^{s-1}}=
\bar{x}_1+u^{2^{s-1}}\bar{x}_1^{2^{s-1}}
$$
and apply \eqref{v^2^m} for $u^{2^{s-1}}$. 

Similarly for $\bar{y}_2^{2^{ns-1}}$.

\medskip

The proof for $G_9$ is completely analogous as it uses the similar formulas for the action of the involution

%
%
\begin{align*}
&t(\lambda)=\lambda\nu^{2^n},\,\,\,t(\nu) =\lambda^{2^n}\nu^{2^{n}+1},\\
&t(u)=F(u,v^{2^{ns}})=u+v^{2^{ns}}+(uv^{2^{ns}})^{2^{s-1}},\\
&t(v)=F(v,F(u^{2^{ns}},v^{2^{ns}})).\\
\end{align*}

\medskip

$ G_7 $: Let $\bar{\lambda}$ be the complex conjugate to $\lambda$ and   
$$
\bar{u}=[-1]_F(u)=e(\bar{\lambda}),\,\,\, \bar{v}=[-1]_F(v)=e(\bar{\nu}).
$$

The involution acts as follows 
%

\begin{align*}
&t(\lambda) =\bar{\lambda},\\
&t(\nu) =\lambda^{2^n}\bar{\nu},\\
&t(u)=\bar{u}\equiv u+(u\bar{u})^{2^{s-1}} mod(1+t), &&\text{ by }\,\, \eqref{FGL} \text{ as } F(u,\bar{u})=0\\
&t(v)=F(\bar{v},u^{2^{ns}})=\bar{v}+u^{2^{ns}}+(\bar{v}u^{2^{ns}})^{2^{s-1}}, &&\text{ by \eqref{FGL2^s} }.
\end{align*}

It follows 
\begin{equation*}
\label{F(u,tu)}
0=u+\bar{u}\,\,\, mod (u\bar{u})^{2^{s-1}}\equiv u+\bar{u} \,\,\,mod (u^{2^s})
\end{equation*}

therefore
\begin{equation*}
\label{G7-x_1}
\bar{x}_1^{2^{ns}}=(u+\bar{u})^{2^{ns}}=0, \text{ as } u^{2^{(n+1)s}}=0.\\
\end{equation*}

Then as $u\bar{u}=\bar{x}_2$ is nilpotent we can eliminate $\bar{x}_2^{2^i}=(u\bar{u})^{2^{i}}$ for $i>s-1$ in \eqref{FGL} after finite steps of iteration and write $\bar{x}_2^{2^{s-1}}$ as a polynomial in $u+\bar{u}=\bar{x}_1$. We will not need this polynomial explicitly but only 

\begin{equation*}
\bar{x}_2^{2^{s-1}}\equiv 0 \mod (1+t).
\end{equation*}

For $\bar{y}_1^{2^{ns}}=0$ apply the formula for $t(v)$ and take into account $v+\bar{v}\equiv 0 \mod v^{2^{s}}$.

For the decomposition of  $\bar{y}_2^{2^{s-1}}$ note we have two formulas for $F(v,t(v))=e(\lambda^{2^n})=u^{2^{ns}}$, one is \eqref{v^2^m} and another is \eqref{FGL}. Equating these formulas we have an expression of the form  
\begin{equation*}
\bar{y}_2^{2^{s-1}}=u\bar{x_1}^{2^{ns}-1}+P(\bar{y}_1,\bar{y}_2), \text{ for some polynomial } P(\bar{y}_1,\bar{y}_2).
\end{equation*}

Again as $\bar{y}_2$ is nilpotent we can eliminate $\bar{y}_2^{2^i}$ for $i>s-1$ in \eqref{FGL} after finite steps of iteration and write $\bar{y}_2^{2^{s-1}}$ in the suggested basis. 
Again we only will  
need that
\begin{equation*}
\label{y_2-G7}
\bar{y}_2^{2^{s-1}}\equiv u\bar{x_1}^{2^{ns}-1} \mod Im(1+t).
\end{equation*}

This completes the proof for $G_7$. The proofs for $G_8$ and $G_{11}$ is analogous. Let us sketch the necessary information for the interested reader to produce detailed proofs.

$G_8$: the action of the involution is as follows
%
\begin{align*}
&t(\lambda) =\bar{\lambda}\lambda^{2^n},\,\,\,
t(\nu)=\bar{\nu}\lambda^{2^n}\nu^{2^n},\,\,\,\\
&t(u)=F(\bar{u},u^{2^{ns}}),\\
&t(v)=F(\bar{v},F(u^{2^{ns}},v^{2^{ns}})).   \\
\end{align*}

$G_{11}$: one has 
\begin{align*}
&t(\lambda) =\bar{\lambda}\nu^{2^n}, \,\,\,
t(\nu)=\bar{\nu}\lambda^{2^n}\nu^{2^n},\,\,\,\\
&t(u)=F(\bar{u},v^{2^{ns}}),   \\
&t(v)=F(\bar{v},F(u^{2^{ns}},v^{2^{ns}})).  \\
\end{align*}

For both cases to get $\bar{x}_1^{2^{ns}}=0$ apply formula for $t(u)$ and $u+\bar{u}\equiv 0 \mod u^{2^{s}}$. Similarly for $\bar{y}_1^{2^{ns}}=0$.
For the decompositions of $\bar{x}_2^{2^{s-1}}$ and  $\bar{y}_2^{2^{s-1}}$   apply \eqref{FGL} and \eqref{v^2^m}.  In particular for $G_8$ we have by \eqref{FGL}  $\bar{x}_2^{2^{s-1}}\equiv u^{2^{ns}}$ modulo some $\bar{x}_1f(\bar{y}_1,\bar{x}_2)\in Im(1+t)$. Therefore $\bar{x}_2^{2^{ns-1}}\equiv 0 \mod(1+t)$ and
by \eqref{v^2^m} for $u$, we have

$$\bar{x}_2^{2^{s-1}}\equiv u^{2^{ns}}\equiv\bar{x}_1^{2^{ns}-1}u+\bar{x}_2^{2^{ns-1}}\equiv \bar{x}_1^{2^{ns}-1}u \,\,\mod(1+t).$$

Similarly  $\bar{y}_2^{2^{ns-1}}\equiv 0 \mod(1+t)$ and we get

$$\bar{x}_2^{2^{s-1}}\equiv F(u^{2^{ns}},v^{2^{ns}})\equiv\bar{x}_1^{2^{ns}-1}u+\bar{y}_1^{2^{ns-1}}v\,\,\ mod(1+t).
$$

Thus we obtain

\begin{align*}
&\bar{x}_1^{2^{ns}}=\bar{y}_1^{2^{ns}}=0, &\text { if } G=G_7, G_8, G_9, \\
&\bar{x}_2^{2^{s-1}}\equiv 0,\,\,\,\,\,\,
\bar{y}_2^{2^{s-1}}\equiv \bar{x}_1^{2^{ns}-1}u \mod(1+t), &\text { if } G=G_7, \\
&\bar{x}_2^{2^{s-1}}\equiv \bar{x}_1^{2^{ns}-1}u,\,\,\,\,\,\, \bar{y}_2^{2^{s-1}}\equiv \bar{x}_1^{2^{ns}-1}u+\bar{y}_1^{2^{ns}-1}v
 \mod(1+t), &\text { if }G=G_8,  \\
& \bar{x}_2^{2^{s-1}}\equiv \bar{y}_1^{2^{ns}-1}v,\,\,\,\,\,\,
\bar{y}_2^{2^{s-1}}\equiv \bar{x}_1^{2^{ns}-1}u+\bar{y}_1^{2^{ns}-1}v  \mod(1+t), &\text { if } G=G_{11}.
\end{align*}

\end{proof}

 \medskip

 \begin{lemma}
 \label{invariants}
 Let
 $g=f_0+f_1u+f_2v+f_3uv\in K(s)^*(BH),\,\,\,where f_i=f_i(\bar{x_1},\bar{y_1}\bar{x_2},\bar{y_2})$ are some polynomials written
 uniquely in the monomials $x^{\omega}$ of Lemma \ref{specialbase}.  Then $g$ is invariant under involution $t\in G/H$ iff 
 \begin{equation*}
 \label{conditions}
 f_3\bar{x_1}=f_3\bar{y_1}=0;\,\,f_1\bar{x_1}=f_2\bar{y_1}.
 \end{equation*}
 \end{lemma}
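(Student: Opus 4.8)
The plan is to identify the fixed subspace with $\ker(1+t)$ and to compute this operator explicitly in the basis furnished by Lemma~\ref{specialbase}. First I would record the two facts I need from the setup. Write $M_0$ for the $K(s)^*$-span of the chosen monomials $\{x^{\omega}\}$, so that Lemma~\ref{specialbase} gives the internal direct sum
\[
K(s)^*(BH)=M_0\oplus M_0u\oplus M_0v\oplus M_0uv ,
\]
in which every $g$ is \emph{uniquely} $g=f_0+f_1u+f_2v+f_3uv$ with $f_i\in M_0$. Since $\bar{x}_1,\bar{x}_2$ (resp.\ $\bar{y}_1,\bar{y}_2$) are the elementary symmetric functions of $\{u,t(u)\}$ (resp.\ $\{v,t(v)\}$) they are $t$-invariant, hence each $f_i$ is $t$-invariant. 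Moreover, directly from $\bar{x}_1=u+t(u)$ and $\bar{y}_1=v+t(v)$ together with characteristic $2$, one has the exact identities $t(u)=u+\bar{x}_1$ and $t(v)=v+\bar{y}_1$.

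Next I would compute $(1+t)g$. Because $t$ is a ring homomorphism fixing the $f_i$,
\[
t(g)=f_0+f_1(u+\bar{x}_1)+f_2(v+\bar{y}_1)+f_3(u+\bar{x}_1)(v+\bar{y}_1),
\]
and collecting terms modulo $2$ yields
\[
(1+t)g=\bigl(f_1\bar{x}_1+f_2\bar{y}_1+f_3\bar{x}_1\bar{y}_1\bigr)+(f_3\bar{y}_1)\,u+(f_3\bar{x}_1)\,v .
\]
As $g$ is invariant precisely when $(1+t)g=0$, it remains to decide when this element vanishes, and for that I must locate the three coefficients inside the direct sum above.

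The crux, and the step I expect to be the main obstacle, is the closure property $\bar{x}_1\cdot M_0\subseteq M_0$ and $\bar{y}_1\cdot M_0\subseteq M_0$. I would prove it by matching the two pieces of data in Lemma~\ref{specialbase}: in each of the six cases the range bound on the exponent of $\bar{x}_1$ (resp.\ $\bar{y}_1$) among the $\{x^{\omega}\}$ coincides with the exponent at which $\bar{x}_1$ (resp.\ $\bar{y}_1$) becomes $0$, while multiplication by $\bar{x}_1$ or $\bar{y}_1$ leaves the exponents of $\bar{x}_2,\bar{y}_2$ untouched and therefore never invokes the reductions $\bar{x}_2^{2^{ns-1}}=\cdots$, $\bar{y}_2^{2^{ns-1}}=\cdots$ which alone introduce factors of $u$ and $v$. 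Hence multiplying a chosen monomial by $\bar{x}_1$ or $\bar{y}_1$ returns either another chosen monomial or $0$, and the claim follows after a case-by-case check of $G_3,G_4,G_7,G_8,G_9,G_{11}$ using the nilpotency degrees already tabulated there.

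Granting this closure, the three coefficients $A:=f_1\bar{x}_1+f_2\bar{y}_1+f_3\bar{x}_1\bar{y}_1$, $B:=f_3\bar{y}_1$ and $C:=f_3\bar{x}_1$ all lie in $M_0$, so $(1+t)g=A\cdot 1+B\cdot u+C\cdot v$ is already expressed in $M_0\oplus M_0u\oplus M_0v$. By directness of the sum, together with the injectivity of $w\mapsto wu$ and $w\mapsto wv$ on $M_0$, the vanishing of $(1+t)g$ is equivalent to $A=B=C=0$. The conditions $B=C=0$ are exactly $f_3\bar{y}_1=f_3\bar{x}_1=0$; these in turn force $f_3\bar{x}_1\bar{y}_1=\bar{y}_1(f_3\bar{x}_1)=0$, so that $A=0$ collapses to $f_1\bar{x}_1+f_2\bar{y}_1=0$, i.e.\ $f_1\bar{x}_1=f_2\bar{y}_1$. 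This is precisely the asserted system, and the converse implication is immediate by substituting the three relations back into the displayed formula for $(1+t)g$, which then gives $0$.
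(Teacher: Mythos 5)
Your proof is correct and takes essentially the same route as the paper: the paper likewise computes $g+t(g)=f_1\bar{x}_1+f_2\bar{y}_1+f_3(\bar{x}_1\bar{y}_1+\bar{x}_1v+\bar{y}_1u)$ and reads off the vanishing of the coefficients in the basis of Lemma \ref{specialbase}. Your explicit verification of the closure property $\bar{x}_1M_0\subseteq M_0$, $\bar{y}_1M_0\subseteq M_0$ (the exponent bounds on $\bar{x}_1,\bar{y}_1$ matching their nilpotency orders, with the $\bar{x}_2,\bar{y}_2$ reductions never triggered) is precisely the step the paper compresses into ``using Lemma \ref{specialbase} the result follows''.
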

 
 \begin{proof} We have $g$ is invariant iff $g \in Ker(1+t)$. Then
 \begin{align*}
 g+t(g)=&f_1(u+t(u))+f_2(v+t(v))+f_3(uv+t(uv))=\\
 &f_1\bar{x_1}+f_2\bar{y_1}+f_3(\bar{x_1}\bar{y_1}+\bar{x_1}v+\bar{y_1}u)\\
 \end{align*}
 and using Lemma \ref{specialbase} the result follows.
 
 \end{proof}

\medskip

To prove Theorem \ref{main} it suffices to see that all invariants are represented by good elements. 
It is obvious for the elements $a+t(a)=\pi^*Tr^*(a)$ in free summand $[F]^{C_2}$ in \eqref{free-trivial}. Therefore one can work modulo $Im(1+t)$ and check the elements in trivial summand $T$. Let us finish the proof of Theorem \ref{main} by  Propositions \ref{T}, i). We will turn to Proposition \ref{T} ii) later.

\begin{proposition}
\label{T}
Let $T'$ be spanned by the set 
 
\begin{align*}
\text{ for } & G_3,&\\
&\{\bar{x}_2^j\bar{y}_2^l, \,\,\,\bar{x}_2^j\bar{y}_2^lu,\,\,\, \bar{y}_1^{2^s-1}\bar{x}_2^j\bar{y}_2^lv,\,\,\,   
\bar{y}_1^{2^s-1} \bar{x}_2^j\bar{y}_2^luv\, |\, j < 2^{ns-1},\,\,\, l < 2^{(n+1)s-1} \},\\ 
\text{for } &G_4, G_9,& \\
&\{\bar{x}_2^i\bar{y}_2^j, \,\,\, 
\bar{x}_1^{2^{s}-1}\bar{x}_2^i\bar{y}_2^ju,\,\,\,
\bar{y}_1^{2^{s}-1}\bar{x}_2^i\bar{y}_2^jv,\,\,\,
\bar{x}_1^{2^{s}-1}\bar{y}_1^{2^{s}-1}\bar{x}_2^i\bar{y}_2^juv \,\,|\,\, i,j <2^{ns-1}\},\\
\text{ for  } &G_7, G_8, G_{11},&\\
&\{\bar{x}_2^i\bar{y}_2^j,\, 
\bar{x}_1^{2^{ns}-1}\bar{x}_2^i\bar{y}_2^ju, \,\bar{y}_1^{2^{ns}-1} \bar{x}_2^i\bar{y}_2^jv,\, \bar{x}_1^{2^{ns}-1}\bar{y}_1^{2^{ns}-1}\bar{x}_2^i\bar{y}^juv |\,\,\, i,j < 2^{s-1} \}. \\
\end{align*}
Then 

i) All terms in $T'$ are represented by good elements and $T\subset T'$.

ii) $T=T'$.

\end{proposition}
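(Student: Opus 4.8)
The plan is to do everything inside the quotient ring $\mathcal{Q}:=[K(s)^*(BH)]^{C_2}/\mathrm{Im}(1+t)$, which by \eqref{free-trivial} realizes the trivial summand $T$; since $\mathrm{Im}(1+t)$ is an ideal of the invariant ring, $\mathcal{Q}$ is a ring and congruences may be multiplied. First I would record three reduction rules. (R1) Because $\bar{x}_1=(1+t)(u)$ and $\bar{y}_1=(1+t)(v)$, for every invariant $p$ one has $\bar{x}_1 p\equiv\bar{y}_1 p\equiv 0$ in $\mathcal{Q}$; in particular any monomial $x^{\omega}$ carrying a positive power of $\bar{x}_1$ or $\bar{y}_1$ dies. (R2) The decoupling identity $\bar{y}_1 u+\bar{x}_1 v=u\,t(v)+v\,t(u)=(1+t)\!\left(t(u)v\right)$ gives $\bar{y}_1 u+\bar{x}_1 v\equiv 0$. (R3) The congruences collected at the close of Lemma \ref{specialbase} (for instance $\bar{x}_2^{2^{s-1}}\equiv 0$ and $\bar{y}_2^{2^{s-1}}\equiv \bar{x}_1^{2^{ns}-1}u$ for $G_7$) trade surplus powers of $\bar{x}_2,\bar{y}_2$ for admissible ones or for $u,v,uv$–terms already on the list.

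For the goodness half of i) the engine is as follows. Writing $\bar{x}_1^{p}=0$ with $p=2^{ms}$ a power of $2$, Frobenius gives $0=\bar{x}_1^{p}=u^{p}+t(u)^{p}$, so $u^{p}=t(u)^{p}=t(u^{p})$ is $t$–invariant. Since $u^{p}=[2^{m}]_F(u)=e(\lambda^{\otimes 2^{m}})$ is the Euler class of a $t$–invariant character of $H$, that character extends to a character of $G$, whose Euler class is good and pulls back to $u^{p}$ (for $G_7,G_8,G_{11}$ these are exactly the classes $a,b$ introduced above). Feeding $u^{p}$ into \eqref{v^2^m} and discarding the $\bar{x}_1$–divisible terms by (R1) yields $u\,\bar{x}_1^{\,p-1}\equiv u^{p}+\bar{x}_2^{\,p/2}\equiv\pi^*(\text{good})$ in $\mathcal{Q}$ (the last term absent when $p=1$, as for $G_3$), and symmetrically $v\,\bar{y}_1^{\,q-1}\equiv\pi^*(\text{good})$; multiplying these two congruences (legitimate in the ring $\mathcal{Q}$) disposes of the $uv$–term. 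Hence every generator of the four $T'$–families is $\pi^*$ of a product of the Euler classes $x_2,y_2,a,b$, so is represented by a good element, the $f_0$–family $\bar{x}_2^{i}\bar{y}_2^{j}=\pi^*(x_2^{i}y_2^{j})$ being good on the nose.

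For the containment $T\subset T'$ I would take an invariant $g=f_0+f_1u+f_2v+f_3uv$ and impose Lemma \ref{invariants}. Rule (R1) collapses $f_0$ to its $\bar{x}_2^{i}\bar{y}_2^{j}$–part. The condition $f_3\bar{x}_1=f_3\bar{y}_1=0$, read off in the truncated–polynomial basis of Lemma \ref{specialbase} where $\mathrm{Ann}(\bar{x}_1)=(\bar{x}_1^{\,p-1})$, forces $f_3\in(\bar{x}_1^{\,p-1}\bar{y}_1^{\,q-1})$, so $f_3uv$ lies in the $uv$–family. The delicate condition is the coupled $f_1\bar{x}_1=f_2\bar{y}_1$: writing the common value as $\bar{x}_1\bar{y}_1 z$ gives $f_1=\bar{y}_1 z+\bar{x}_1^{\,p-1}h_1$ and $f_2=\bar{x}_1 z+\bar{y}_1^{\,q-1}h_2$, whereupon $\bar{x}_1^{\,p-1}h_1u$ and $\bar{y}_1^{\,q-1}h_2v$ are the $u$– and $v$–families while the cross term $z\,(\bar{y}_1u+\bar{x}_1v)$ vanishes by (R2). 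I expect this coupled step—together with the range bookkeeping for $G_7,G_8,G_{11}$, where (R3) must be invoked to push all $\bar{x}_2,\bar{y}_2$–exponents below $2^{s-1}$—to be the main obstacle, as it is precisely where the special basis and the formal–group congruences of Lemma \ref{specialbase} have to be combined.

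Part ii) then reduces to a rank count. By i) we have $T\subseteq\langle T'\rangle$ inside $\mathcal{Q}$, and each listed generator is invariant by Lemma \ref{invariants}, so $\langle T'\rangle\subseteq T$; equality follows once the four families are shown linearly independent in $\mathcal{Q}$, equivalently once $|T'|=\dim_{K(s)^*}\mathcal{Q}=\dim_{K(s)^*}\mathrm{coker}(1+t)$. I would compute the latter from the explicit involution of Lemma \ref{specialbase}, splitting $K(s)^*(BH)$ (of total rank $4^{(n+1)s}$) into free and trivial $\mathbb{F}_2[C_2]$–summands and reading off the trivial rank; matching it with $|T'|$ (for example $4\cdot 4^{\,ns-1}$ for $G_4,G_9$) gives $T=T'$.
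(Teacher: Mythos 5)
Your part i) follows essentially the same skeleton as the paper's proof: $T\subset T'$ is read off from Lemma \ref{specialbase} together with Lemma \ref{invariants}, and each family is exhibited as a $\pi^*$-image of (products of) Euler classes via \eqref{v^2^m}, exactly as in the paper's congruences $u\bar{x}_1^{2^{ms}-1}\equiv u^{2^{ms}}+\bar{x}_2^{2^{ms-1}} \bmod(1+t)$. Where you invoke the general principle that a $t$-invariant character of $H$ extends over the cyclic quotient (true, since $H^2(C_2,\mathbb{C}^\times)=0$, but you should justify it), the paper instead writes down the extensions explicitly: $\lambda'$, $\det(\pi_!\nu)\otimes\alpha$, $\det(\pi_!\lambda)\otimes\alpha\beta$, etc. Your treatment of the containment is actually more systematic than the paper's rather terse one, but two details need patching, both within your own framework. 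First, your annihilator claims $\mathrm{Ann}(\bar{x}_1)=(\bar{x}_1^{p-1})$ are not automatic, because the $x^\omega$ do \emph{not} generate a truncated polynomial ring: the rewriting relations of Lemma \ref{specialbase} (e.g.\ $\bar{x}_2^{2^{ns-1}}=\bar{y}_1+v\bar{y}_1^{2^s-1}+\cdots$ for $G_3$) leak $u,v,uv$-terms. The claims do hold here, but only because multiplication by $\bar{x}_1$ or $\bar{y}_1$ raises no $\bar{x}_2,\bar{y}_2$-exponent and hence never triggers those relations; say so. Second, in $f_1=\bar{y}_1z+\bar{x}_1^{p-1}h_1$ the polynomial $h_1$ may carry positive powers of $\bar{y}_1$, so $\bar{x}_1^{p-1}h_1u$ is not yet on your $u$-list; rule (R2) disposes of these, since $\bar{x}_1^{p-1}\bar{y}_1^{k}mu\equiv\bar{x}_1^{p}\bar{y}_1^{k-1}mv=0 \bmod \mathrm{Im}(1+t)$ for $k>0$, and symmetrically for $h_2$. (Your remark that (R3) must be invoked at this stage for $G_7,G_8,G_{11}$ is unnecessary: in Lemma \ref{invariants} the $f_i$ are already written in the restricted monomials, so no exponent-pushing arises in the containment step.)

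For part ii) you genuinely diverge from the paper, and this is where your argument remains a sketch. The paper does not compute $\dim_{K(s)^*}\mathrm{coker}(1+t)$ from the involution; it imports the Euler characteristics $\chi_{2,s}(G)$ from HKR Theorem D, solves for the trivial rank $x$ via the counting relation \eqref{sizeT}, and checks $|T'|=x$. Your plan — counting the trivial $\mathbb{F}_2[C_2]$-summands directly, i.e.\ computing $\dim\hat{H}^0(C_2;K(s)^*(BH))$ from the explicit action — is internally consistent and in principle feasible: the solution spaces of the linear conditions of Lemma \ref{invariants} are countable in the monomial basis (your annihilator analysis already parametrizes them), and the trivial rank is then $2\dim\ker(1+t)-4^{(n+1)s}$. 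It would buy independence from HKR Theorem D and from \eqref{sizeT} (which the paper itself only asserts), at the cost of a concrete per-group linear-algebra count; conversely the paper's route yields the global invariants $\chi_{2,s}(G_i)$ for all seventeen groups as a by-product and serves as a cross-check. But as written you have only asserted, not performed, the decisive equality $|T'|=\dim\mathrm{coker}(1+t)$, so part ii) of your proposal is a credible program rather than a proof.
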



Proof of i).
	
$G_3$. The basis set of $T'$ above is suggested by Lemma \ref{specialbase} and Lemma \ref{invariants}: it is clear that all its terms are invariants. The terms $\bar{x}_2^j\bar{y}_1^k\bar{y}_2^{l}\in Im(1+t)$, $k>0$ are omitted as we work modulo $1+t$.  Then all the restrictions follow
by
\begin{equation*}
\bar{y}_1^{2^s}=0,\,\,\,\bar{x}_1=0,\,\,\,\bar{y}_2^{2^{(n+1)s-1}}=0,\,\,\,
\bar{x}_2^{2^{ns-1}}\equiv v\bar{y_1}^{2^s-1} \mod(1+t). 
\end{equation*} 

So that $T\subset T'$. Let us check that  $T'$ is generated  by the images of Euler classes under $\pi^*$, where $\pi$ is the double covering \eqref{pi}.

By definitions 
\begin{align*}
&\pi^*(\alpha)=\lambda^{2^n},  &&\,\,\pi^*(det\pi_!(\nu)\otimes \alpha)=\nu \lambda^{2^n} \nu  \lambda^{2^n}=\nu^2,\\
&\pi^*(v')=v^{2^s}, \,\,\,&&\text{ where}\,\,\,v'=e(det\pi_!(\nu)\otimes\alpha). 
\end{align*}

Taking into account \eqref{v^2^m}, for $m=s$, we get  
\begin{equation}
\label{v3}
\pi^*(v')=v^{2^s}=v\bar{y_1}^{2^{s}-1}+
\sum_{i=1}^{s}\bar{y_1}^{2^{s}-2^i}\bar{y_2}^{2^{i-1}}=
\bar{y}_2^{2^{s-1}}+v\bar{y_1}^{2^{s}-1}\,\,\, \mod(1+t).
\end{equation}

By definition $\bar{x}_2=\pi^*(x_2)$ and $\bar{y}_2=\pi^*(y_2)$. Combining with \eqref{v3} this implies that all elements of the first and third parts of the basis set of $T'$ are $\pi^*$ images of the sums of Euler classes.

For the rest parts of the basis of $T'$ note, that the bundle $\lambda$ can be extended to a bundle over $BG$, say $\lambda'$, represented by 
$\lambda'(\mathbf{a})=e^{2\pi i/2^{n+1}}$, $\lambda'(\mathbf{b})=\lambda'(\mathbf{c})=1$. 
So $\pi^*(e(\lambda'))=u$. Then note that the second and last parts is obtained by multiplying by $u$ of the first and third parts respectively. So that we can easily read off all elements as $\pi^*$ images of the sums of Euler classes.

\medskip

$G_4$. Again the basis for $T'$ is suggested by by Lemma \ref{specialbase}: we have $\bar{x}_1^{2^s}=\bar{y}_1^{2^s}=0$ 
and $\bar{x}_2^{2^{ns-1}}$ and $\bar{y}_2^{2^{ns-1}}$ are decomposable. 
Then applying \eqref{v^2^m} we get

\begin{align*}
&\pi^*(det(\pi_!\nu)\otimes \alpha)=\nu^2,\,\,\,\pi^*(e(det(\pi_!\nu)\otimes \alpha))=v^{2^s}\equiv v\bar{y}_1^{2^s-1}+\bar{y}_2^{2^{s-1}} \mod(1+t),\\
&\pi^*(det(\pi_!\lambda)\otimes \alpha\beta)=\lambda^2,\,\,\, \pi^*(e(det(\pi_!\lambda)\otimes \alpha\beta))=u^{2^s}\equiv u\bar{x}_1^{2^s-1}+\bar{x}_2^{2^{s-1}} \mod(1+t).
\end{align*} 

Thus $G_4$ is good. The proof for $G_9$ is completely analogous.

\medskip

$G_7,G_8,G_{11}$:  It is clear that all elements of the basis elements for $T'$ are invariants and all restrictions are explained by Lemma \ref{specialbase}. It suffices to check that all elements are represented by images of the sums of Euler classes.

$G_7$. The bundle $\lambda^{2^n}$ and $\nu^{2^n}$ can be extended to the line bundles over $BG$, say $\lambda'$ and $\nu'$ respectively. So that 
$$
\pi^*(e(\nu'))=e(\nu^{2^n})=v^{2^{ns}} \text{ and } \,\,\,\pi^*(e(\lambda'))=e(\lambda^{2^n})=u^{2^{ns}}.
$$
Applying again \eqref{v^2^m} we get 

\begin{align*}
\pi^*e(\lambda')=&u^{2^{ns}}=u\bar{x_1}^{2^{ns}-1}+
\sum_{i=1}^{ns}\bar{x_1}^{2^{ns}-2^i}\bar{x_2}^{2^{i-1}}\equiv \\ &u\bar{x_1}^{2^{ns}-1}+\bar{x}_2^{2^{ns-1}} \mod(1+t) \equiv \\
&u\bar{x_1}^{2^{ns}-1} \mod(1+t) \text { by Lemma }\,\,\, \ref{specialbase}\\
\end{align*}

Similarly, applying Lemma \ref{specialbase} we have for $G_8$  

\begin{align*}
&\pi^*(e(det(\pi_!\lambda))=u^{2^{ns}}\equiv \bar{x}_1^{2^{ns}-1}u \,\,\mod(1+t),\\
&\pi^*(e(det(\pi_!\nu))=F(u^{2^{ns}},v^{2^{ns}})\equiv\bar{x}_1^{2^{ns}-1}u+\bar{y}_1^{2^{ns-1}}v\,\,\ mod(1+t)
\end{align*}

and for $G_{11}$

\begin{align*}
&\pi^*(e(det(\pi_!\lambda))=v^{2^{ns}}\equiv\bar{y}_1^{2^{ns}-1}v\,\,\, mod(1+t),\\
&\pi^*(e(det(\pi_!\nu))=F(u^{2^{ns}},v^{2^{ns}})\equiv\bar{x}_1^{2^{ns}-1}u +\bar{y}_1^{2^{ns}-1}v \,\,\,mod(1+t).\\
\end{align*}

This completes the proof of Theorem \ref{main}.

\qed

\bigskip

Proposition \ref{T} ii) may have an independent interest. Let us sketch the proof.

Using the Euler characteristic formula of \cite{HKR}, Theorem D,
one can compute $K(s)^*$-Euler characteristic
$$\chi_{2,s}(G)=rank_{K(s)^*}K(s)^{even}(BG),$$ 
for the classifying spaces of the groups in the title. The answer is as follows. 

\begin{align*}
&\text{group}      &&\chi_{2,s} \\
&G_1               &&2^{(2n+3)s},\\	&G_2, G_4, G_9     &&2^{2(n+1)s-1}-2^{2ns-1}+2^{(2n+1)s},\\	&G_3,G_{10}          &&3 \cdot 2^{2(n+1)s-1}-2^{(2n+1)s-1},\\	&G_5, G_6, G_7, G_8, G_{11}, G_{12} &&2^{2(n+1)s-1}-2^{2s-1}+2^{3s}, \\
&G_{13}, G_{16}                     &&2^{2(n+1)s-1}-2^{(n+2)s-1}+2^{(n+3)s}, \\	
&G_{14}, G_{15}, G_{17}             &&2^{2(n+1)s-1}-2^{(n+1)s-1}+2^{(n+2)s}.
\end{align*}

As $T \subset T'$ it suffices to prove $\chi_{2,s}(T)=\chi_{2,s}(T')$.  It is easily checked the relation between the size of the trivial summand $x=\chi_{2,s}(T)$ and $\chi_{2,s}(G)$ for all groups under consideration
\begin{equation}
\label{sizeT}
(\chi_{2,s}(H)-x):2+2^sx=\chi_{2,s}(G). 
\end{equation}
 Therefore it suffices to see that the number of basis elements of $T'$ in Lemma \ref{T} i)  

\begin{align*}
&\text{T'}      &&\chi_{2,s} (T')\\
&G_3               &&2^{(2n+1)s},\\	
&G_4, G_9     &&4^{ns},\\
&G_7, G_8, G_{11}  &&4^{s}.
\end{align*}

is equal to $x$ in \eqref{sizeT} for all cases.

\qed

%

%

%
%


\bibliographystyle{amsplain}

\begin{thebibliography}{10}

\bibitem {A} \emph{J.F. Adams} : {\it Infinite loop spaces}, Annals of Mathematics Studies, Princeton University Press, Princeton, (1978).



\bibitem{B-GMJ2} \emph{M. Bakuradze, N. Gachechiladze}: {\it Morava K-theory rings of the extensions of $C_2$ by the products of cyclic 2-groups } (2015), Moscow Math. J., to appear,  arXiv:1412.2674 [math.AT].

\bibitem{B1} \emph{M. Bakuradze} : {\it Morava $K(s)^*$-rings of the extensions of $C_p$ by the products of good groups under diagonal action}, Georgian Math. J.,  {\bf 22 (4)}(2015), 451-455.






\bibitem{BJ} \emph{M. Bakuradze, M. Jibladze} : {\it Morava K-theory rings of groups $G_{38},...,G_{41}$ of order 32,} J. K-Theory, 13 (2014), 171-198





\bibitem {BV} \emph{M. Bakuradze, V.V. Vershinin} : {\it Morava  K-theory rings for the dihedral, semi-dihedral and
generalized quaternion groups in Chern Classes,} Proc. Amer. Math. Soc., {\bf 134}(2006), 3707-3714 .


\bibitem{D} \emph{A. Dold} : {\it The fixed point transfer of fibre-preserving maps}, Math. Zeit., {\bf 148}(1976), 215-244.


\bibitem{G-P} \emph{T. Gramushnjak and P. Puusemp} : {\it Description of a Class of 2-Groups}, Journal of Nonlinear Math. Physics 13, Suppl. (2006), 55–65


\bibitem {HKR} \emph{ M. Hopkins, N. Kuhn, and D. Ravenel} : {\it Generalized group characters and complex oriented cohomology theories },
J. Amer. Math. Soc., {\bf 13, 3}(2000), 553-594.



\bibitem {JW} \emph{ D. C. Jonson, W. S. Wilson} : {\it $BP$ operations and Morava's extraordinary $K$-theories}, Math. Z., {\bf 144} (1975), 55-75.


\bibitem {KP} \emph{D. S. Kahn, S. B. Priddy} : {\it Applications of the transfer to stable homotopy theory}, Bull. Amer. Math. Soc.,
{\bf 78}(1972),981-987.



\bibitem {K} \emph{I. Kriz} : {\it Morava $K$-theory of classifying spaces: Some calculations}, Topology, 36(1997), 1247-1273.

\bibitem {RAV} \emph{D. C. Ravenel} : {\it Morava $K$-theories and finite groups}, Contemp. Math., {\bf 12} (1982), 289-292.


\bibitem {SCH1} \emph{B. Schuster} : {\it Morava $K$-theory of groups of order 32}, Algebraic and Geometric Topology, {\bf 11}(2011), 503-521.






\bibitem {SCH4} \emph{B. Schuster} : {\it Morava $K$-theory of classifying spaces}, Habilitationsschrift, 2006, 124 pp.



\bibitem {SCHY} \emph{B. Schuster, N. Yagita} : {\it On Morava $K$-theory of extraspecial 2-groups}, Proc. Amer. Math. Soc., {\bf 132, 4}(2004), 1229-1239.

\bibitem {TY1} \emph{M. Tezuka and N. Yagita} : {\it Cohomology of finite groups and Brown-Peterson cohomology II}, Algebraic Topology(Arcata, Ca, 1986), 396-408. Lecture Notes in Math. {\bf 1370}, Springer, Berlin, 1989.

\bibitem {TY2} \emph{M. Tezuka and N. Yagita} : {\it Cohomology of finite groups and Brown-Peterson cohomology II}, Homotopy theory and related topics(Kinosaki, 1988), 57-69. Lecture Notes in Math. {\bf 1418}, Springer, Berlin, 1990.







\end{thebibliography}

\end{document}